\newtheorem{theorem}{\bf Theorem}[section]
\newtheorem{corollary}[theorem]{\bf Corollary}
\title{A note on the packing chromatic number\\ of lexicographic products\thanks{The second author was partially supported by Slovenian research agency under the grants P1-0297 and J1-9109.}}
\author{Dragana Bo\v{z}ovi\'c$^{(1)}$ and Iztok Peterin$^{(1,2)}$\\
\\
$^{(1)}$ {\small Faculty of Electrical Engineering and Computer Science}\\
{\small University of Maribor,} {\small Koro\v{s}ka cesta 46, 2000 Maribor, Slovenia.} \\
$^{(2)}${\small Institute of Mathematics, Physics and Mechanics}\\
{\small Jadranska ulica 19, 1000 Ljubljana, Slovenia.} \\
{\small\texttt{e-mails:}\small\it dragana.bozovic\@@um.si and iztok.peterin\@@um.si} \\
}
\date{}
\begin{document}

\maketitle

\begin{abstract}
The packing chromatic number $\chi_{\rho}(G)$ of a graph $G$ is the smallest integer $k$ such
that there exists a $k$-vertex coloring of $G$ in which any two vertices receiving color
$i$ are at distance at least $i+1$. In this short note we present upper and lower bound for the packing chromatic number of the lexicographic product $G\circ H$ of graphs $G$ and $H$. Both bounds coincide in many cases. In particular this happens if $|V(H)|-\alpha(H)\geq {\rm diam}(G)-1$, where $\alpha(G)$ denotes the independence number of $G$.
\end{abstract}

{\it Keywords: packing chromatic number, lexicographic product of graphs} 

{\it AMS Subject Classification (2010): 05C15, 05C12, 05C70, 05C76}

\section{Introduction and preliminaries} 
Let $G$ be a simple graph. To shorten the notation we use $|G|$ instead of $|V(G)|$ for the order of $G$. The \emph{distance} $d_G(u,v)$ between vertices $u$ and $v$ of $G$ is the length of a shortest path between $u$ and $v$ in $G$. The \emph{diameter} of $G$ is denoted by ${\rm diam}(G)$ and is the maximum length of a shortest path between any two vertices of $G$.

Let $t$ be a positive integer. A set $X\subseteq V(G)$ is a $t$-\emph{packing} if any two different vertices from $X$ are at distance more than $t$. The $t$-\emph{packing number} of $G$, denoted by $\rho_t(G)$, is the maximum cardinality of a $t$-packing of $G$. Notice, that if $t=1$, then the $1$-packing number equals to the independence number $\alpha (G)$ and we use the later more common notation for it. An independent set of cardinality $\alpha(G)$ is called $\alpha(G)$-\emph{set}. 
The \emph{packing chromatic number} $\chi_{\rho}(G)$ of $G$ is the smallest integer $k$ such that $V(G)$ can be partitioned into subsets $X_1,\dots, X_k$, where $X_i$ induces an $i$-packing for every $1\leq i\leq k$. Another approach is from a $k$-\emph{packing coloring} of $G$, which is a function $c: V(G) \rightarrow [k]$, where $[k]=\{1,\dots, k\}$, such that if $c(u)=c(v)=i$, then $d_G(u,v)>i$. Clearly, $\chi_{\rho}(G)$ is the minimum integer $k$ for which a $k$-packing coloring of $G$ exists. 

The concept of packing chromatic number was introduced by Goddard et al. in \cite{GHHHR} under the name broadcast chromatic number. The problem of determining the packing chromatic number of a graph is a very difficult problem and is NP-complete even for trees as shown in \cite{FiGo}. The attention was fast drawn to Cartesian product and infinite latices like hexagonal, triangular and similar. In \cite{bresar-klavzar-rall-1} it was shown that the packing chromatic number of an infinite hexagonal lattice lies between 6 and 8. Upper bound was later improved to 7 in \cite{fiala-klavzar-lidicky} and finally settled to 7 in \cite{korze-vesel}. For infinite triangular lattice and three-dimensional integer lattice $\mathbb{Z}^3$ the packing chromatic number is infinite as shown in \cite{finbow-rall}. The packing chromatic number of the Cartesian product was already considered in \cite{bresar-klavzar-rall-1} where the general upper and lower bound were set. The lower bound was later improved in \cite{jacobs-jonck-joubert}. Several exact values and bounds for special families of Cartesian product graphs can be found in \cite{jacobs-jonck-joubert,korze-vesel}. 

In this note we switch from Cartesian to lexicographic product and prove an upper and a lower bound for the packing chromatic number of lexicographic product. It turns out that these two bounds coincide in many cases. In particular, if ${\rm diam}(G)\leq 2$, if ${\rm diam}(G)=3$ and $H\ncong \overline{K}_n$ and if $|V(H)|-\alpha(H)\geq {\rm diam}(G)-1$ and $H\ncong \overline{K}_n$. 

The \emph{lexicographic product} of graphs $G$ and $H$ is the graph $G\circ H$ (also sometimes denoted with $G[H]$) with the vertex set $V(G) \times V(H)$. Two vertices $(g_1,h_1)$ and $(g_2,h_2)$ are adjacent if either $g_1g_2 \in E(G)$ or $g_1 = g_2$ and $h_1h_2 \in E(H)$. Set $G^h=\{(g,h):g\in V(G)\}$ is called a $G$-\emph{layer through} $h$ and $H^g=\{(g,h):h\in V(H)\}$ is called an $H$-\emph{layer through} $h$. Clearly, subgraphs of $G\circ H$ induced by $G^h$ and $H^g$ are isomorphic to $G$ and $H$, respectively. The distance between two vertices in lexicographic product is given by
\begin{equation}\label{dist}
d_G((g_1,h_1),(g_2,h_2))=
\left\{ 
\begin{array}{ccc}
d_G(g_1,g_2)& : & g_1 \neq g_2 \\  
\min\{2,d_H(h_1,h_2)\}& : & g_1=g_2\\
\end{array}%
\right. 
\end{equation}
and depends heavily on the distance between projections of both vertices to $G$. For the independence number it is well known that 
\begin{equation}\label{indep}
\alpha(G \circ H)=\alpha(G)\alpha(H),
\end{equation}
see Theorem 1 in \cite{geller-stahl}.
Lexicographic product $G\circ H$ is connected if and only if $G$ is connected. For more properties of the lexicographic product see~\cite{hammack-imrich-klavzar}.

\section{Results} 

In this section we present a lower and an upper bound for the packing chromatic number of lexicographic product of graphs. We start with the lower bound and we use the following notation
\begin{equation*}
d(G)=
\left\{ 
\begin{array}{ccc}
1& : & G\cong K_n \\  
{\rm diam}(G)-1& : & \textnormal{otherwise}\\
\end{array}%
\right. .
\end{equation*}

\begin{theorem} \label{spodnjameja}
If $G$ and $H$ are graphs, then
$$\chi_{\rho}(G\circ H)\geq |G|\cdot|H|-\alpha(G)\alpha(H)-\sum_{i=2}^{{\rm diam}(G)-1} \rho_i(G)+d(G).$$
\end{theorem}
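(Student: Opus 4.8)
The plan is to fix an optimal packing coloring $c\colon V(G\circ H)\to[k]$ with $k=\chi_\rho(G\circ H)$, set $X_i=c^{-1}(i)$ (so each $X_i$ induces an $i$-packing), bound $|X_i|$ from above for every $i$, and then read off a lower bound for $k$ from $\sum_{i=1}^k|X_i|=|G|\cdot|H|$. Since every $i$-packing is also a $j$-packing for $j\le i$, unused colors may be ignored without affecting the estimate; we may also assume $G$ is connected, since otherwise ${\rm diam}(G)=\infty$ and the asserted right-hand side is $-\infty$.

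The class $X_1$ is independent in $G\circ H$, hence $|X_1|\le\alpha(G\circ H)=\alpha(G)\alpha(H)$ by \eqref{indep}. For $i\ge2$ I claim $|X_i|\le\rho_i(G)$. Two distinct vertices of $X_i$ lie at distance $>i\ge2$ in $G\circ H$, hence at distance at least $3$; by \eqref{dist} any two vertices sharing an $H$-layer are at distance at most $2$, so the projection $\pi_G$ onto the first coordinate is injective on $X_i$, and for distinct $(g_1,h_1),(g_2,h_2)\in X_i$ formula \eqref{dist} also gives $d_G(g_1,g_2)=d_{G\circ H}\bigl((g_1,h_1),(g_2,h_2)\bigr)>i$. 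Thus $\pi_G(X_i)$ is an $i$-packing of $G$ of cardinality $|X_i|$, so $|X_i|\le\rho_i(G)$.

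Next I would use the elementary fact that $\rho_i(G)=1$ whenever $i\ge{\rm diam}(G)$. Suppose first $G\not\cong K_n$, so $d(G)={\rm diam}(G)-1$; combining the bounds above, and assuming $k\ge{\rm diam}(G)$ (the case $k<{\rm diam}(G)$ being immediate from $|G|\cdot|H|\le\alpha(G)\alpha(H)+\sum_{i=2}^k\rho_i(G)$ together with $\rho_i(G)\ge1$ for all $i$), we get
\[
|G|\cdot|H|=\sum_{i=1}^k|X_i|\le\alpha(G)\alpha(H)+\sum_{i=2}^{{\rm diam}(G)-1}\rho_i(G)+\bigl(k-{\rm diam}(G)+1\bigr),
\]
and rearranging gives precisely the stated inequality. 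If $G\cong K_n$, then ${\rm diam}(G)=1$, $\alpha(G)=1$, $d(G)=1$, the sum over $i$ from $2$ to ${\rm diam}(G)-1$ is empty, and every $i$-packing of $K_n\circ H$ with $i\ge2$ is a single vertex (all distances in $K_n\circ H$ are at most $2$); the identical count yields $|G|\cdot|H|\le\alpha(H)+(k-1)$, i.e.\ $k\ge|G|\cdot|H|-\alpha(G)\alpha(H)+1$, which is the claim in this case.

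I expect the only non-routine step to be the inequality $|X_i|\le\rho_i(G)$ for $i\ge2$, which rests entirely on the distance formula \eqref{dist}: the key point is that for $i\ge2$ an $i$-packing of $G\circ H$ cannot contain two vertices from the same $H$-layer and therefore projects injectively onto an $i$-packing of $G$. Everything else is bookkeeping — summing the per-class bounds, using $\rho_i(G)=1$ for $i\ge{\rm diam}(G)$, and separating the degenerate cases $G\cong K_n$ and (if one insists on full rigor) $\chi_\rho(G\circ H)<{\rm diam}(G)$.
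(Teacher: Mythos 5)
Your proposal is correct and follows essentially the same route as the paper: bound $|X_1|$ by $\alpha(G)\alpha(H)$, bound $|X_i|$ by $\rho_i(G)$ for $i\ge 2$ via the observation that no two vertices of an $i$-packing can share an $H$-layer, note that classes with $i\ge{\rm diam}(G)$ are singletons, and count. Your write-up is in fact slightly more careful than the paper's (explicit projection argument for $|X_i|\le\rho_i(G)$, and separate treatment of the cases $k<{\rm diam}(G)$ and $G\cong K_n$), but the underlying idea is identical.
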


\begin{proof}
Denote $\ell=|G|\cdot|H|-\alpha(G)\alpha(H)-\sum_{i=2}^{{\rm diam}(G)-1} \rho_i(G)+d(G)$. Let $X_1,\dots, X_k$ be a partition of $V(G\circ H)$ that yields a $k$-packing coloring of $G\circ H$. 
We have at most $\alpha(G)\alpha(H)$ vertices in $X_1$ by (\ref{indep}). Denote by $R_i$ a $\rho_i(G\circ H)$-set for $2\leq i \leq {\rm diam}(G)-1$. By (\ref{dist}) we have $|H^g\cap R_i|\leq 1$ for every $g\in V(G)$. So there are at most $\rho_i(G)$ vertices in $X_i$ for $2\leq i \leq {\rm diam}(G)-1$.
For $i\geq {\rm diam}(G)$ there can only be one vertex in $X_i$ since all the vertices are at distance at most ${\rm diam}(G)$ from vertex in $X_i$. So we have at most $\alpha(G)\alpha(H)$ vertices colored with color 1, at most $\rho_i(G)$ vertices colored with color $i$ for every $2\leq i\leq{\rm diam}(G)-1$, and we need one color for each one of the remaining vertices and there are $|G|\cdot|H|-\alpha(G)\alpha(H)-\sum_{i=2}^{{\rm diam}(G)-1} \rho_i(G)$ of them. Meaning that $\chi_{\rho}(G\circ H)\geq \ell$ because we have exactly $d(G)$ color classes which possibly have more than one vertex. 
\end{proof}

We continue with an upper bound that has a similar structure as the lower bound from Theorem \ref{spodnjameja}.

\begin{theorem} \label{zgornjameja}
Let $G$ and $H$ be graphs and $k=|H|-\alpha(H)$. If $H\ncong\overline{K}_n$, then
$$\chi_{\rho}(G\circ H)\leq |G|\cdot|H|-\alpha(G)\alpha(H)-\sum_{i=2}^{k+1} \rho_i(G)+k+1.$$
\end{theorem}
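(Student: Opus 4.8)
The strategy is to construct an explicit packing coloring of $G\circ H$ that matches the claimed bound, mirroring the structure of the lower bound but working "the other way." The key observation is that since $H\ncong\overline{K}_n$, we have $k=|H|-\alpha(H)\geq 1$, so $H$ has a nonempty edge set. Fix an $\alpha(H)$-set $A\subseteq V(H)$ and enumerate the remaining vertices of $H$ as $V(H)\setminus A=\{h_1,\dots,h_k\}$. The plan is to color the $G$-layers $G^h$ with $h\in A$ using "small" colors and to spend the remaining colors on the layers $G^{h_1},\dots,G^{h_k}$.

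\medskip

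First I would handle the layers through $A$. For a fixed $\alpha(G)$-set $B\subseteq V(G)$, assign color $1$ to all vertices of $B\times A$; this is legitimate because by (\ref{indep}) the set $B\times A$ is independent in $G\circ H$, and it uses $\alpha(G)\alpha(H)$ of the "cheap" slots. Next, for each $i$ with $2\leq i\leq k+1$, I would place a $\rho_i(G)$-set $R_i\subseteq V(G)$ inside a single layer $G^{h}$ with $h\in A$ and color $R_i\times\{h\}$ with color $i$; since inside one $G$-layer distances in $G\circ H$ agree with distances in $G$ (by (\ref{dist}), as $g_1\neq g_2$), this is a valid $i$-packing. The only subtlety is that we must be able to fit all these sets $R_2,\dots,R_{k+1}$ into the $|A|=\alpha(H)$ available layers $G^h$, $h\in A$, in a non-overlapping way together with the color-$1$ set $B$; here one uses that $\alpha(H)\geq 1$ and that, having removed $B\times A$ and the $R_i$'s, what remains in these layers can be mopped up with fresh large colors — one color per leftover vertex, each such color exceeding $\operatorname{diam}(G\circ H)$ so that a singleton is trivially a packing. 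Counting: the layers through $A$ contribute $\alpha(G)\alpha(H)$ to color $1$, $\rho_i(G)$ to each color $i$ for $2\leq i\leq k+1$, and the rest of their $\alpha(H)|G|$ vertices each eat one big color.

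\medskip

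Then I would color the $k$ layers $G^{h_1},\dots,G^{h_k}$. Each such layer has $|G|$ vertices, and since any two vertices in the same layer are at distance at most $2$ in $G\circ H$, and any vertex in $G^{h_j}$ is at distance at most $\operatorname{diam}(G)$ from any other vertex, these layers are "expensive": I would simply give every vertex in $G^{h_1}\cup\dots\cup G^{h_k}$ its own distinct large color. That is $k|G|$ vertices, hence $k|G|$ distinct colors, all chosen larger than any color used so far and larger than $\operatorname{diam}(G\circ H)$, so every color class is a singleton and trivially valid.

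\medskip

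Finally I would do the bookkeeping to see that the total number of colors used equals the claimed bound. The cheap colors $1,2,\dots,k+1$ account for $k+1$ colors. The number of vertices that receive a "private" large color is $\big(\alpha(H)|G|-\alpha(G)\alpha(H)-\sum_{i=2}^{k+1}\rho_i(G)\big)+k|G| = |G|(|H|) - \alpha(G)\alpha(H)-\sum_{i=2}^{k+1}\rho_i(G)$, since $\alpha(H)|G|+k|G|=(\alpha(H)+k)|G|=|H|\,|G|$. Adding the $k+1$ cheap colors and these private colors gives exactly $|G|\cdot|H|-\alpha(G)\alpha(H)-\sum_{i=2}^{k+1}\rho_i(G)+k+1$, as required. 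The main obstacle, and the place where care is genuinely needed, is the packing arrangement inside the $\alpha(H)$ layers through $A$: one must verify that the $\rho_i(G)$-sets $R_2,\dots,R_{k+1}$ together with the color-$1$ block can be laid out without conflict, and that two vertices receiving the same color $i\in\{2,\dots,k+1\}$ but lying in different $G$-layers through $A$ are still at distance $>i$ — but this follows because such vertices project to distinct points of $G$ inside a common $R_i$, so their $G\circ H$-distance equals their $G$-distance, which exceeds $i$ by definition of $R_i$.
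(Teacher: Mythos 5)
Your overall architecture (color $1$ on $A_G\times A_H$, colors $2,\dots,k+1$ realized as $\rho_i(G)$-sets inside single $G$-layers, singletons for everything else, and the final count) matches the paper, but you place the sets $R_2,\dots,R_{k+1}$ in the wrong layers, and this breaks the construction. You put each $R_i$ into a layer $G^{h}$ with $h\in A$, i.e.\ into a layer that already carries the color-$1$ block $B\times\{h\}$. You flag the resulting conflict as ``the only subtlety'' but never resolve it, and in general it cannot be resolved: there is no guarantee that a maximum $i$-packing of $G$ can be chosen disjoint from a maximum independent set of $G$ (let alone that several $R_i$'s plus $B$ fit disjointly into the same layer when $k>\alpha(H)$). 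Concretely, take $G=P_5$ with vertices $v_1,\dots,v_5$ and $H=K_2$. The unique $\alpha(P_5)$-set is $\{v_1,v_3,v_5\}$ and every $\rho_2(P_5)$-set (namely $\{v_1,v_4\}$, $\{v_1,v_5\}$, $\{v_2,v_5\}$) meets it; the vertices left uncolored in a layer through $A$ are $v_2,v_4$, which are at distance $2$, so you can place only one vertex of color $2$ there instead of $\rho_2(P_5)=2$. Your construction then uses $8$ colors while the theorem claims $7$.

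The paper avoids this entirely: it places color $i$, for $2\le i\le k+1$, into the $k=|H|-\alpha(H)$ layers $G^{h}$ with $h\notin A_H$, which contain no color-$1$ vertices at all, so a full $\rho_i(G)$-set fits with no conflict (one color per empty layer, and there are exactly $k$ empty layers for the $k$ colors). This is precisely where the hypothesis $H\ncong\overline{K}_n$ is used: it guarantees $k\ge 1$, i.e.\ that such empty layers exist. In your plan those $k$ empty layers are squandered on singleton colors while the packings are forced into the crowded layers. The fix is simply to swap the roles of the two families of layers; with that change your distance verification (two vertices of $R_i\times\{h\}$ lie in a common $G$-layer, so their distance in $G\circ H$ equals their distance in $G$, which exceeds $i$) and your bookkeeping go through verbatim and recover the paper's proof.
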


\begin{proof}
Denote $\ell=|G|\cdot|H|-\alpha(G)\alpha(H)-\sum_{i=2}^{k+1} \rho_i(G)+k+1$. 
We know that $\alpha(G \circ H)=\alpha(G)\alpha(H)$ and it is easy to see that $\alpha(G\circ H)$-set can be written as $A_G\times A_H$ where $A_G$ is an $\alpha(G)$-set and $A_H$ is an $\alpha(H)$-set. We color all the vertices from $A_G \times A_H$ with color 1.
Let $k=|H|-\alpha(H)$. There remain $k$ $G$-layers with no colored vertices. In each of those layers we color $\rho_i(G)$ vertices with color $i$, $2\leq i \leq k+1$ (one color $i$ is used in one layer). Each of the remaining uncolored vertices is colored with its own color.  
So we have $\alpha(G)\alpha(H)$ vertices colored with color 1, $\rho_i(G)$ vertices colored with color $i$ for every $2\leq i \leq k+1$, and we need one color for each one of the remaining uncolored vertices. Clearly, there are $|G|\cdot|H|-\alpha(G)\alpha(H)-\sum_{i=2}^{k+1} \rho_i(G)$ vertices colored with its own color. Meaning that $\chi_{\rho}(G\circ H)\leq \ell$ because we have $k+1$ color classes which possibly have more than one vertex.
\end{proof}

Notice that if ${\rm diam}(G)\leq 2$, then also ${\rm diam}(G\circ H)\leq 2$ (see \ref{dist}) and only color 1 can appear more then once in any packing coloring. Therefore, if ${\rm diam}(G)\leq 2$,  Theorem \ref{zgornjameja} also holds for $H\cong \overline{K}_n$. 
Next we show that if the number of vertices of $H$ without its $\alpha(H)$-set is comparable with ${\rm diam}(G)$, then both bounds coincide. 

\begin{corollary} \label{posledicaizreka1}
Let $G$ and $H$ be graphs and $|H|-\alpha(H)\geq {\rm diam}(G)-1$. If $H\ncong\overline{K}_n$, then
$$\chi_{\rho}(G\circ H)=|G|\cdot|H|-\alpha(G)\alpha(H)- \sum_{i=2}^{{\rm diam}(G)-1} \rho_i(G) +{\rm diam}(G)-1.$$ 
\end{corollary}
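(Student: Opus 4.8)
The plan is to combine the two bounds already established. Assume $G\not\cong K_n$, so that $d(G)={\rm diam}(G)-1$ (if $G\cong K_n$ then ${\rm diam}(G)\leq 1$ while $d(G)=1$, and then the right-hand side would sit strictly below the bound of Theorem~\ref{spodnjameja}, so that case must be excluded). With $d(G)={\rm diam}(G)-1$, Theorem~\ref{spodnjameja} reads
\[
\chi_{\rho}(G\circ H)\;\geq\;|G|\cdot|H|-\alpha(G)\alpha(H)-\sum_{i=2}^{{\rm diam}(G)-1}\rho_i(G)+{\rm diam}(G)-1,
\]
which is exactly the quantity in the statement; so only the matching upper bound is missing.

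For that I would invoke Theorem~\ref{zgornjameja} with $k=|H|-\alpha(H)$. Since $H\ncong\overline{K}_n$ we have $k\geq 1$, and the hypothesis $|H|-\alpha(H)\geq{\rm diam}(G)-1$ translates to $k+1\geq{\rm diam}(G)$. The one small fact I would use is $\rho_i(G)=1$ whenever $i\geq{\rm diam}(G)$: a single vertex is an $i$-packing, and two distinct vertices of $G$ lie at distance at most ${\rm diam}(G)\leq i$, so they cannot both belong to an $i$-packing. Splitting the sum of Theorem~\ref{zgornjameja} at ${\rm diam}(G)$ and applying this,
\[
\sum_{i=2}^{k+1}\rho_i(G)=\sum_{i=2}^{{\rm diam}(G)-1}\rho_i(G)+\sum_{i={\rm diam}(G)}^{k+1}1=\sum_{i=2}^{{\rm diam}(G)-1}\rho_i(G)+\bigl(k+2-{\rm diam}(G)\bigr),
\]
so Theorem~\ref{zgornjameja} becomes
\[
\chi_{\rho}(G\circ H)\;\leq\;|G|\cdot|H|-\alpha(G)\alpha(H)-\sum_{i=2}^{{\rm diam}(G)-1}\rho_i(G)-\bigl(k+2-{\rm diam}(G)\bigr)+k+1 .
\]
The two occurrences of $k$ cancel, the right-hand side is again the quantity in the statement, and together with the lower bound this gives the asserted equality.

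I do not expect a real obstacle: the argument is essentially a side-by-side comparison of Theorems~\ref{spodnjameja} and~\ref{zgornjameja}. What needs a moment of care is the index bookkeeping — in particular that $k+1\geq{\rm diam}(G)$ (this is exactly where the hypothesis on $H$ enters) makes the splitting of the sum legitimate, and the degenerate case ${\rm diam}(G)=2$, where $\sum_{i=2}^{{\rm diam}(G)-1}\rho_i(G)$ is an empty sum but the computation still goes through unchanged — together with the elementary equality $\rho_i(G)=1$ for $i\geq{\rm diam}(G)$.
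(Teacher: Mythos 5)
Your main computation is exactly the paper's: for $G\not\cong K_n$ you take the lower bound of Theorem~\ref{spodnjameja} with $d(G)={\rm diam}(G)-1$, split the sum in Theorem~\ref{zgornjameja} at $i={\rm diam}(G)$ using $\rho_i(G)=1$ for $i\geq{\rm diam}(G)$ (legitimate because $k+1\geq{\rm diam}(G)$), and watch the two bounds collapse to the same value. That part is correct and identical in route to the paper's proof.

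The only divergence is the case $G\cong K_n$. You exclude it, observing that the displayed formula would then fall strictly below the lower bound of Theorem~\ref{spodnjameja}; the paper instead treats it as a separate case and computes $\chi_{\rho}(K_n\circ H)=n|H|-\alpha(H)+1$ from both theorems. Your observation is in fact sharper than the paper's handling: for $G\cong K_n$ ($n\geq 2$) the right-hand side of the corollary evaluates to $n|H|-\alpha(H)+{\rm diam}(G)-1=n|H|-\alpha(H)$, which disagrees by one with the value $n|H|-\alpha(H)+1$ that the paper itself derives; the stated formula is only correct there if one reads $d(G)$ in place of ${\rm diam}(G)-1$. So your proof establishes the corollary on the domain where its formula is actually valid, and correctly flags the complete-graph case as the one where the literal statement breaks down.
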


\begin{proof}
Let first $G\cong K_n$. By Theorem~\ref{spodnjameja} it holds that $\chi_{\rho}(G\circ H)\geq n|H|-\alpha(H)-\sum_{i=2}^{0} \rho_i(G)+1=n|H|-\alpha(H)+1$ since $d(G)=1$. On the other hand let $k=|H|-\alpha(H)$ and we have $\chi_{\rho}(G\circ H)\leq n|H|-\alpha(H)-(k+1-2+1)+k+1=n|H|-\alpha(H)+1$ by Theorem~\ref{zgornjameja} since $\rho_i(G)=1$ for every $2\leq i \leq k+1$. Hence, the equality follows when $G\cong K_n$.

\noindent Otherwise $G\not \cong K_n$ and $d(G)={\rm diam}(G)-1$. So by Theorem~\ref{spodnjameja} it holds that $\chi_{\rho}(G\circ H)\geq |G|\cdot|H|-\alpha(G)\alpha(H)-\sum_{i=2}^{{\rm diam}(G)-1} \rho_i(G)+{\rm diam}(G)-1$. Since $k\geq {\rm diam}(G)-1$ and $\rho_i(G)=1$ for every ${\rm diam}(G) \leq i \leq k+1$, by Theorem~\ref{zgornjameja} it holds that 
\begin{equation}
\begin{split}
\chi_{\rho}(G\circ H) &\leq |G|\cdot|H|-\alpha(G)\alpha(H)- \left ( \sum_{i=2}^{{\rm diam}(G)-1} \rho_i(G) + \sum_{i={\rm diam}(G)}^{k+1} \rho_i(G)\right ) +k+1 = \\
&=|G|\cdot|H|-\alpha(G)\alpha(H)- \sum_{i=2}^{{\rm diam}(G)-1} \rho_i(G) - (k+1-{\rm diam}(G)+1) +k+1 = \\
&=|G|\cdot|H|-\alpha(G)\alpha(H)- \sum_{i=2}^{{\rm diam}(G)-1} \rho_i(G) +{\rm diam}(G)-1.
\end{split}
\end{equation}
\end{proof}

We can expect that the condition of Corollary~\ref{posledicaizreka1} will be fulfilled more frequently when ${\rm diam}(G)$ is small. In particular, for ${\rm diam}(G)=1$ the condition is always satisfied and we have
$$\chi_{\rho}(G\circ H)=|G|\cdot|H|-\alpha(H)+1$$ as seen in the proof of the previous corollary.
Notice that in the case of ${\rm diam}(G)=2$ the sum in the lower bound of Theorem \ref{spodnjameja} does not exist and that $d(G)=1$. Also $\rho_i(G)=1$ for every $2\leq i\leq k$ since ${\rm diam}(G)=2$. Therefore we have $-\sum_{i=2}^{k+1} \rho_i(G)+k+1=-(k+1-2+1)+k+1=1$ in the upper bound of Theorem \ref{zgornjameja}. Hence both bounds coincide and we have the following corollary.   

\begin{corollary} \label{posledicaizreka2}
Let $G$ and $H$ be graphs. If ${\rm diam}(G)=2$, then 
$$\chi_{\rho}(G\circ H)=|G|\cdot|H|-\alpha(G)\alpha(H)+1.$$
\end{corollary}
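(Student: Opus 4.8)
The plan is to derive Corollary~\ref{posledicaizreka2} as a direct specialization of the general bounds already established, namely Theorem~\ref{spodnjameja} for the lower bound and Theorem~\ref{zgornjameja} (together with the remark immediately following it) for the upper bound, using only the arithmetic simplifications that ${\rm diam}(G)=2$ forces on the sums and on the term $d(G)$.

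First I would handle the lower bound. When ${\rm diam}(G)=2$ the graph $G$ is not complete (since $K_n$ has diameter $1$), so $d(G)={\rm diam}(G)-1=1$. Moreover the sum $\sum_{i=2}^{{\rm diam}(G)-1}\rho_i(G)=\sum_{i=2}^{1}\rho_i(G)$ is empty. Substituting these into the statement of Theorem~\ref{spodnjameja} gives immediately
$$\chi_{\rho}(G\circ H)\geq |G|\cdot|H|-\alpha(G)\alpha(H)+1.$$

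Next I would handle the upper bound. Here one must take care of the case $H\cong\overline{K}_n$ separately, because Theorem~\ref{zgornjameja} is stated only for $H\ncong\overline{K}_n$; but the remark right after its proof records that when ${\rm diam}(G)\leq 2$ we also have ${\rm diam}(G\circ H)\leq 2$, so only color $1$ can repeat, and hence Theorem~\ref{zgornjameja} holds for $H\cong\overline{K}_n$ as well. So we may apply Theorem~\ref{zgornjameja} unconditionally when ${\rm diam}(G)=2$. Writing $k=|H|-\alpha(H)$, and using that ${\rm diam}(G)=2$ implies $\rho_i(G)=1$ for every $i\geq 2$ (any two vertices are at distance at most $2$, so an $i$-packing with $i\geq 2$ is a single vertex), the tail of the bound simplifies: $-\sum_{i=2}^{k+1}\rho_i(G)+k+1=-(k+1-2+1)+k+1=1$. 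Hence Theorem~\ref{zgornjameja} yields $\chi_{\rho}(G\circ H)\leq |G|\cdot|H|-\alpha(G)\alpha(H)+1$. Combining the two inequalities gives the claimed equality.

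There is essentially no obstacle here; the only point requiring a moment of attention is the $H\cong\overline{K}_n$ case, which is already dealt with by the remark following Theorem~\ref{zgornjameja}, and the empty-sum / empty-product conventions used when ${\rm diam}(G)=2$. Everything else is the routine substitution sketched above. (Indeed the text preceding the corollary statement already carries out exactly this computation, so the proof can be compressed to a single line that points to Theorem~\ref{spodnjameja}, Theorem~\ref{zgornjameja}, and the remark, observing that both bounds equal $|G|\cdot|H|-\alpha(G)\alpha(H)+1$ when ${\rm diam}(G)=2$.)
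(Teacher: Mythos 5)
Your proposal is correct and follows essentially the same route as the paper: the text immediately preceding the corollary performs exactly this specialization, taking the lower bound from Theorem~\ref{spodnjameja} with the empty sum and $d(G)=1$, and the upper bound from Theorem~\ref{zgornjameja} with $\rho_i(G)=1$ for $i\geq 2$ so that $-\sum_{i=2}^{k+1}\rho_i(G)+k+1=1$, with the $H\cong\overline{K}_n$ case covered by the remark after Theorem~\ref{zgornjameja}. No discrepancies.
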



Similar holds also when ${\rm diam}(G)=3$. Namely in this case ${\rm diam}(G\circ H)=3$ by (\ref{dist}) and only two color classes ($X_1$ and $X_2$) can have more than one representative. Therefore bounds from Theorems \ref{zgornjameja} and \ref{spodnjameja} coincide again under condition that there is at least one $G$-layer without vertices from $X_1$. This always occurs if $H\ncong\overline{K}_n$ and the following corollary holds.

\begin{corollary} \label{posl}
Let $G$ and $H$ be graphs. If ${\rm diam}(G)=3$ and $H\ncong\overline{K}_n$, then 
$$\chi_{\rho}(G\circ H)=|G|\cdot|H|-\alpha(G)\alpha(H)-\rho_2(G)+2.$$
\end{corollary}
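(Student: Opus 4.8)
The plan is to deduce Corollary~\ref{posl} directly from the two bounds already established, exactly in the spirit of Corollaries~\ref{posledicaizreka1} and~\ref{posledicaizreka2}. Since ${\rm diam}(G)=3$, we have $G\not\cong K_n$, so $d(G)={\rm diam}(G)-1=2$, and the sum $\sum_{i=2}^{{\rm diam}(G)-1}\rho_i(G)$ reduces to the single term $\rho_2(G)$. Plugging these into Theorem~\ref{spodnjameja} immediately gives the lower bound
$$\chi_{\rho}(G\circ H)\geq |G|\cdot|H|-\alpha(G)\alpha(H)-\rho_2(G)+2.$$

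For the upper bound I would invoke Theorem~\ref{zgornjameja}, which applies since $H\ncong\overline{K}_n$. Set $k=|H|-\alpha(H)\geq 1$. If $k=1$, the sum $\sum_{i=2}^{k+1}\rho_i(G)=\rho_2(G)$ and $k+1=2$, so the upper bound already reads $|G|\cdot|H|-\alpha(G)\alpha(H)-\rho_2(G)+2$, matching. If $k\geq 2$, then for every $3\leq i\leq k+1$ we have $i\geq{\rm diam}(G)+1> {\rm diam}(G)$, hence $\rho_i(G)=1$ (at most one vertex can be pairwise at distance greater than ${\rm diam}(G)$). Splitting the sum as $\sum_{i=2}^{k+1}\rho_i(G)=\rho_2(G)+\sum_{i=3}^{k+1}1=\rho_2(G)+(k-1)$ and substituting into Theorem~\ref{zgornjameja} yields
$$\chi_{\rho}(G\circ H)\leq |G|\cdot|H|-\alpha(G)\alpha(H)-\rho_2(G)-(k-1)+k+1=|G|\cdot|H|-\alpha(G)\alpha(H)-\rho_2(G)+2,$$
so the upper and lower bounds coincide in all cases.

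There is essentially no hard step here; the only point requiring a word of justification is why no color class other than $X_1$ and $X_2$ can contain more than one vertex, which is precisely the telescoping cancellation above and is also the content of the remark preceding the corollary: by~(\ref{dist}), ${\rm diam}(G)=3$ forces ${\rm diam}(G\circ H)=3$, so any color $i\geq 3$ used twice would put two vertices at distance at most $3\leq i$, a contradiction. I would write the proof as a short two-case computation mirroring the proof of Corollary~\ref{posledicaizreka1}, taking care that the degenerate case $k=1$ (where the ``otherwise'' sum is empty or the index range collapses) is handled explicitly rather than swept into the general formula.
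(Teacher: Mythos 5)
Your proof is correct and takes essentially the same route as the paper, which (in the paragraph preceding the corollary) argues that ${\rm diam}(G\circ H)=3$ forces every color class beyond $X_2$ to be a singleton so that the bounds of Theorems~\ref{spodnjameja} and~\ref{zgornjameja} coincide; your explicit two-case computation, including the degenerate case $k=1$ not covered by Corollary~\ref{posledicaizreka1}, is exactly what the paper leaves implicit. One harmless slip: for $i=3$ you have $i={\rm diam}(G)$ rather than $i\geq{\rm diam}(G)+1$, but $\rho_i(G)=1$ already holds for all $i\geq{\rm diam}(G)$, so the conclusion stands.
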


Continuing in this manner things get more complicated. Therefore we finish with an approach from the different side and concentrate on a family of graphs with big diameter, namely the case when $G\cong P_n$. For this we first improve the upper bound from Theorem \ref{zgornjameja}. 

\begin{theorem} \label{pot-poljuben}
Let $H$ a graph and $n$ a positive integer. If $k=|H|-\alpha(H)$, then 
$$\chi_{\rho}(P_n\circ H) \leq n|H|-\left \lceil \frac{n}{2} \right \rceil \alpha{(H)}-\sum_{i=2}^{k+1}\left \lceil \frac{n}{i+1} \right \rceil-
\sum_{j=k+2}^{|H|+1} \left ( \left \lfloor \frac{\left \lfloor \frac{n}{2} \right \rfloor -1}{\left \lfloor \frac{j}{2} \right \rfloor +1} \right \rfloor + 1 \right ) + |H|+1.$$
\end{theorem}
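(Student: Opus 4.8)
The plan is to construct an explicit packing coloring of $P_n\circ H$ and count the colors used. Label the vertices of $P_n$ as $1,\dots,n$ along the path. The starting point is the construction in the proof of Theorem \ref{zgornjameja}, but now I exploit the long diameter of $P_n$: colors $i$ with $2\le i\le k+1$ can each be reused inside a single $G$-layer many times (since $P_n$ admits large $i$-packings), and even colors $j>k+1$ — which in a general $G$ with $\mathrm{diam}(G)=\mathrm{diam}(P_n)$ would be forced to appear once — can now be reused. First I would handle color $1$: by \eqref{indep} an $\alpha(P_n\circ H)$-set is $A_{P_n}\times A_H$ where $A_{P_n}$ is an independent set in $P_n$ of size $\alpha(P_n)=\lceil n/2\rceil$ (take the odd-indexed vertices) and $A_H$ is an $\alpha(H)$-set; color all of $A_{P_n}\times A_H$ with $1$, accounting for the $\lceil n/2\rceil\,\alpha(H)$ term.

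Next I would deal with the "defect" layers. After removing $A_H$ from each $H$-layer there remain $k=|H|-\alpha(H)$ vertices of $H$ uncovered; equivalently, in each copy $H^g$ there are $k$ vertices not yet colored. Reorganize the uncolored vertices so that for each of the $k$ values $i\in\{2,\dots,k+1\}$ we devote one "horizontal slice" $P_n^{h_i}$ (a $G$-layer) and inside it use color $i$ as often as possible: a maximum $i$-packing of $P_n$ has size $\rho_i(P_n)=\lceil n/(i+1)\rceil$, which gives the $\sum_{i=2}^{k+1}\lceil n/(i+1)\rceil$ term. This leaves, in each such slice, the complement of an $i$-packing still uncolored; and it leaves further slices entirely uncolored if $|H|>\alpha(H)+k$ — but $|H|=\alpha(H)+k$ exactly, so in fact all remaining uncolored vertices lie in these $k$ slices together with whatever is left over. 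For the colors $j$ with $k+2\le j\le |H|+1$: here I would observe that although $\mathrm{diam}(P_n\circ H)$ can be as large as $n-1$, a color $j$ with $j$ that large relative to $n$ can still be repeated along a single $G$-layer with spacing governed by the distance formula \eqref{dist}. Two vertices $(g_1,h),(g_2,h)$ in the same layer are at distance $d_{P_n}(g_1,g_2)=|g_1-g_2|$, so inside one layer I can place color $j$ on vertices whose path-coordinates are $>j$ apart; but I also want to avoid the already-used coordinates (the odd ones, used by color $1$), and the precise count of how many copies of color $j$ fit, starting after coordinate $1$ and stepping by the relevant amount inside $\{2,4,\dots\}$-type residues, is exactly $\lfloor(\lfloor n/2\rfloor-1)/(\lfloor j/2\rfloor+1)\rfloor+1$. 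I would verify this by a direct interval-counting argument: among the $\lfloor n/2\rfloor$ or so available slots, consecutive choices must be $\lfloor j/2\rfloor+1$ apart.

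The bookkeeping then is: total vertices $n|H|$, minus those colored $1$, minus those colored $2,\dots,k+1$, minus those colored $k+2,\dots,|H|+1$, and the leftover vertices each get a private color; adding $1$ (for color class $1$) $+\,k$ (for classes $2,\dots,k+1$) $+\,(|H|+1-(k+2)+1)=|H|-k$ (for classes $k+2,\dots,|H|+1$) back in as "one representative each already counted" yields precisely the claimed bound $n|H|-\lceil n/2\rceil\alpha(H)-\sum_{i=2}^{k+1}\lceil n/(i+1)\rceil-\sum_{j=k+2}^{|H|+1}(\lfloor(\lfloor n/2\rfloor-1)/(\lfloor j/2\rfloor+1)\rfloor+1)+|H|+1$. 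The step I expect to be the main obstacle is showing that the assignment is genuinely a valid packing coloring, i.e.\ that the chosen vertices for color $j$ in one $G$-layer really are pairwise at distance $>j$ \emph{and} do not clash with any other use of a color — in particular that spreading the "complement of an $i$-packing" into private colors does not accidentally reuse a color $j\le |H|+1$ at distance $\le j$. This requires fixing once and for all which layer hosts which color and checking, via \eqref{dist}, that cross-layer pairs are automatically far enough apart (distance $\ge 1$ always, and the only constraint that bites is within a layer or for color $1$). Careful choice of the residue classes used by color $1$ versus colors $j\ge k+2$ — say color $1$ on odd coordinates and the higher colors packed among the complement — makes the within-layer distances transparent, and the cross-layer constraints are vacuous for all colors $\ge 2$ since distinct layers give distance exactly equal to the $P_n$-distance of the projections, which can be made $>j$ by never repeating such a color across two different layers.
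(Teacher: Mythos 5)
Your construction is the same as the paper's: color $1$ on $A_{P_n}\times A_H$ with $A_{P_n}$ the odd-indexed vertices, colors $2,\dots,k+1$ via maximum $i$-packings of $P_n$ placed one per $G$-layer through the $k$ vertices outside $A_H$, colors $k+2,\dots,|H|+1$ placed one per $G$-layer through $A_H$ on even coordinates spaced $2\lfloor j/2\rfloor+2$ apart (yielding $\lfloor(\lfloor n/2\rfloor-1)/(\lfloor j/2\rfloor+1)\rfloor+1$ occurrences), and private colors for the rest; the bookkeeping matches as well. The proposal is correct and essentially identical to the paper's proof.
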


\begin{proof}
Let $P_n=v_1\ldots v_n$ and $A_H$ be an $\alpha(H)$-set. Clearly, $A_{P_n}=\{v_{2i-1}:i\in \left[\left\lceil\frac{n}{2}\right\rceil\right]\}$ is an $\alpha(P_n)$-set and $A=A_{P_n}\times A_H$ is an $\alpha(P_n\circ H)$-set. Firstly, we color vertices with $k+1$ colors as in the proof of Theorem \ref{zgornjameja}. For this we use 
$$\ell=n|H|-\left \lceil \frac{n}{2} \right \rceil\alpha{(H)}-\sum_{i=2}^{k+1}\left \lceil \frac{n}{i+1} \right \rceil+k+1$$
colors because $\rho_i(P_n)=\left \lceil \frac{n}{i+1} \right \rceil$. 

\noindent In each $G^h$-layer, $h\in A_H$, there exist $\left \lfloor \frac{n}{2} \right \rfloor$ still not colored vertices with an even distance between any two of them. We denote them by $B^h=(V(P_n)-A_{P_n})\times\{h\}$. Additionally we will color with color $j$, $k+2 \leq j \leq |H|+1$, some vertices of exactly one $G^h$-layer, $h\in A_H$. Denote by $G_j^h$ the $G^h$-layer, $h\in A_H$, containing vertices of color $j$, $k+2 \leq j \leq |H|+1$. The biggest distance between two vertices from $B^h$ equals $2\left \lfloor \frac{n}{2} \right \rfloor -2$. Notice that two vertices of $G_j^h$ colored with $j$ must be at least $p_j=2 \left \lfloor \frac{j}{2} \right \rfloor +2$ apart because every second vertex in $G_j^h$-layer, $h\in A_H$, $k+2 \leq j \leq |H|+1$, is already colored (with color 1). Therefore, we can color with $j$ vertices from set
$$\{(v_{2+sp_j},h):0\leq s\leq \left \lfloor \frac{\left \lfloor \frac{n}{2} \right \rfloor -1}{\left \lfloor \frac{j}{2} \right \rfloor +1} \right \rfloor\}.$$ 
Meaning that $t_j= \left ( \left \lfloor \frac{\left \lfloor \frac{n}{2} \right \rfloor -1}{\left \lfloor \frac{j}{2} \right \rfloor +1} \right \rfloor + 1 \right )$ vertices can be colored with color $j$, $k+2 \leq j \leq |H|+1$ in $G_j^h$-layer, $h\in A_H$. \\

\noindent By Theorem~\ref{zgornjameja} we use at most $\ell$ colors for coloring $P_n\circ H$. In addition $t_j$ vertices of $G_j^h$ are colored with $j$, $k+2 \leq j \leq |H|+1$. Meaning that $$\chi_{\rho}(P_n\circ H)\leq \ell-\sum_{j=k+2}^{|H|+1}t_j+|H|-k$$ which completes the proof.
\end{proof}

For $H\cong K_m$ we have $\alpha (H)=1$ and $k=m-1$. The second sum of Theorem \ref{pot-poljuben} has only one term and that is in the case of $j=|H|+1$ so we immediately obtain the following.

\begin{corollary} \label{pot-poln}
For positive integers $n$ and $m$ we have 
$$\chi_{\rho}(P_n\circ K_m) \leq nm-\left \lceil \frac{n}{2} \right \rceil-\sum_{i=2}^{m}\left \lceil \frac{n}{i+1} \right \rceil-
\left \lfloor \frac{\left \lfloor \frac{n}{2} \right \rfloor -1}{\left \lfloor \frac{m+1}{2} \right \rfloor +1} \right \rfloor + m.$$
\end{corollary}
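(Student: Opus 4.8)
The plan is to derive Corollary~\ref{pot-poln} directly from Theorem~\ref{pot-poljuben} by specializing $H\cong K_m$ and simplifying the two sums. First I would record the two elementary facts about the complete graph: $\alpha(K_m)=1$, hence $k=|H|-\alpha(H)=m-1$, and $|H|+1=m+1$. Substituting these into the bound of Theorem~\ref{pot-poljuben} turns $\lceil n/2\rceil\alpha(H)$ into $\lceil n/2\rceil$ and turns the term $|H|+1$ into $m+1$.

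Next I would examine the range of the first sum, which runs over $2\le i\le k+1=m$; this is unchanged and contributes exactly $\sum_{i=2}^{m}\lceil n/(i+1)\rceil$. The key simplification is the second sum, which runs over $k+2\le j\le |H|+1$, i.e. over $m+1\le j\le m+1$: a single index $j=m+1$. Therefore the whole sum collapses to its one term
$$t_{m+1}=\left\lfloor \frac{\left\lfloor \frac n2\right\rfloor-1}{\left\lfloor \frac{m+1}{2}\right\rfloor+1}\right\rfloor+1.$$
Plugging this in, the $+1$ from $t_{m+1}$ cancels with one unit of the trailing $|H|+1=m+1$, leaving $+m$, which matches the stated right-hand side.

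The only thing to double-check is that Theorem~\ref{pot-poljuben} genuinely applies, i.e. that $K_m\ncong\overline{K}_n$ is not secretly required; but Theorem~\ref{pot-poljuben} is stated for an arbitrary graph $H$ with no such exclusion, so there is no obstacle here. I expect the main (and essentially only) subtlety to be the bookkeeping with the two summation ranges — confirming that for $H\cong K_m$ the outer sum over $j$ really does reduce to the single index $j=m+1$, and that no term is lost or double-counted when the constant $+|H|+1$ is split against $\sum t_j$ — after which the identity is immediate. Since every step is a direct substitution already justified by Theorem~\ref{pot-poljuben}, the corollary follows, completing the argument.
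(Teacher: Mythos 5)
Your proposal is correct and follows exactly the paper's route: specialize Theorem~\ref{pot-poljuben} to $H\cong K_m$, note $\alpha(K_m)=1$ and $k=m-1$ so the second sum collapses to the single term $j=m+1$, and absorb its $+1$ into $|H|+1=m+1$ to get $+m$. The bookkeeping you flag checks out, so nothing further is needed.
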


The upper bound from Theorem~\ref{pot-poljuben} is not the best possible in the general case which we can see in the example of coloring $P_8\circ P_6$. Using the coloring described in the proof of that theorem we use 32 colors to color $P_8\circ P_6$, see left part of Figure~\ref{fig:pot8pot6}. But the same graph can be colored with 31 colors, so $\chi_{\rho}(P_8\circ P_6) \leq 31$, see right part of Figure~\ref{fig:pot8pot6}. 

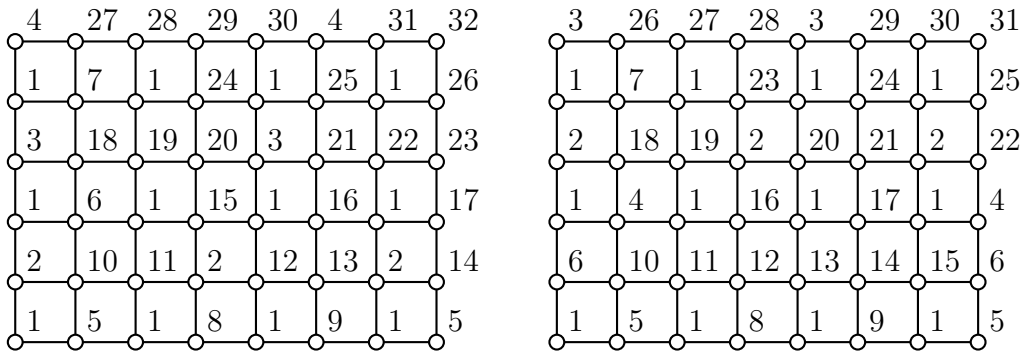
\begin{figure}[H]
\begin{center}
\begin{tikzpicture}[xscale=.8, yscale=.8, style=thick,x=1cm,y=1cm]
\def\vr{3.5pt} 


\path (0,0) coordinate (x1);
\path (1,0) coordinate (x2);
\path (2,0) coordinate (x3);
\path (3,0) coordinate (x4);
\path (4,0) coordinate (x5);
\path (5,0) coordinate (x6);
\path (6,0) coordinate (x7);
\path (7,0) coordinate (x8);

\path (0,1) coordinate (y1);
\path (1,1) coordinate (y2);
\path (2,1) coordinate (y3);
\path (3,1) coordinate (y4);
\path (4,1) coordinate (y5);
\path (5,1) coordinate (y6);
\path (6,1) coordinate (y7);
\path (7,1) coordinate (y8);

\path (0,2) coordinate (z1);
\path (1,2) coordinate (z2);
\path (2,2) coordinate (z3);
\path (3,2) coordinate (z4);
\path (4,2) coordinate (z5);
\path (5,2) coordinate (z6);
\path (6,2) coordinate (z7);
\path (7,2) coordinate (z8);

\path (0,3) coordinate (w1);
\path (1,3) coordinate (w2);
\path (2,3) coordinate (w3);
\path (3,3) coordinate (w4);
\path (4,3) coordinate (w5);
\path (5,3) coordinate (w6);
\path (6,3) coordinate (w7);
\path (7,3) coordinate (w8);

\path (0,4) coordinate (u1);
\path (1,4) coordinate (u2);
\path (2,4) coordinate (u3);
\path (3,4) coordinate (u4);
\path (4,4) coordinate (u5);
\path (5,4) coordinate (u6);
\path (6,4) coordinate (u7);
\path (7,4) coordinate (u8);

\path (0,5) coordinate (v1);
\path (1,5) coordinate (v2);
\path (2,5) coordinate (v3);
\path (3,5) coordinate (v4);
\path (4,5) coordinate (v5);
\path (5,5) coordinate (v6);
\path (6,5) coordinate (v7);
\path (7,5) coordinate (v8);


\draw (x1) -- (x2);
\draw (x2) -- (x3);
\draw (x3) -- (x4);
\draw (x4) -- (x5);
\draw (x5) -- (x6);
\draw (x6) -- (x7);
\draw (x7) -- (x8);

\draw (y1) -- (y2);
\draw (y2) -- (y3);
\draw (y3) -- (y4);
\draw (y4) -- (y5);
\draw (y5) -- (y6);
\draw (y6) -- (y7);
\draw (y7) -- (y8);

\draw (z1) -- (z2);
\draw (z2) -- (z3);
\draw (z3) -- (z4);
\draw (z4) -- (z5);
\draw (z5) -- (z6);
\draw (z6) -- (z7);
\draw (z7) -- (z8);

\draw (w1) -- (w2);
\draw (w2) -- (w3);
\draw (w3) -- (w4);
\draw (w4) -- (w5);
\draw (w5) -- (w6);
\draw (w6) -- (w7);
\draw (w7) -- (w8);

\draw (u1) -- (u2);
\draw (u2) -- (u3);
\draw (u3) -- (u4);
\draw (u4) -- (u5);
\draw (u5) -- (u6);
\draw (u6) -- (u7);
\draw (u7) -- (u8);

\draw (v1) -- (v2);
\draw (v2) -- (v3);
\draw (v3) -- (v4);
\draw (v4) -- (v5);
\draw (v5) -- (v6);
\draw (v6) -- (v7);
\draw (v7) -- (v8);

\draw (x1) -- (y1);
\draw (y1) -- (z1);
\draw (z1) -- (w1);
\draw (w1) -- (u1);
\draw (u1) -- (v1);

\draw (x8) -- (y8);
\draw (y8) -- (z8);
\draw (z8) -- (w8);
\draw (w8) -- (u8);
\draw (u8) -- (v8);

\draw (x2) -- (y2);
\draw (y2) -- (z2);
\draw (z2) -- (w2);
\draw (w2) -- (u2);
\draw (u2) -- (v2);

\draw (x3) -- (y3);
\draw (y3) -- (z3);
\draw (z3) -- (w3);
\draw (w3) -- (u3);
\draw (u3) -- (v3);

\draw (x4) -- (y4);
\draw (y4) -- (z4);
\draw (z4) -- (w4);
\draw (w4) -- (u4);
\draw (u4) -- (v4);

\draw (x5) -- (y5);
\draw (y5) -- (z5);
\draw (z5) -- (w5);
\draw (w5) -- (u5);
\draw (u5) -- (v5);

\draw (x6) -- (y6);
\draw (y6) -- (z6);
\draw (z6) -- (w6);
\draw (w6) -- (u6);
\draw (u6) -- (v6);

\draw (x7) -- (y7);
\draw (y7) -- (z7);
\draw (z7) -- (w7);
\draw (w7) -- (u7);
\draw (u7) -- (v7);

\draw (x1) [fill=white] circle (\vr);
\draw (x2) [fill=white] circle (\vr);
\draw (x3) [fill=white] circle (\vr);
\draw (x4) [fill=white] circle (\vr);
\draw (x5) [fill=white] circle (\vr);
\draw (x6) [fill=white] circle (\vr);
\draw (x7) [fill=white] circle (\vr);
\draw (x8) [fill=white] circle (\vr);

\draw (y1) [fill=white] circle (\vr);
\draw (y2) [fill=white] circle (\vr);
\draw (y3) [fill=white] circle (\vr);
\draw (y4) [fill=white] circle (\vr);
\draw (y5) [fill=white] circle (\vr);
\draw (y6) [fill=white] circle (\vr);
\draw (y7) [fill=white] circle (\vr);
\draw (y8) [fill=white] circle (\vr);

\draw (z1) [fill=white] circle (\vr);
\draw (z2) [fill=white] circle (\vr);
\draw (z3) [fill=white] circle (\vr);
\draw (z4) [fill=white] circle (\vr);
\draw (z5) [fill=white] circle (\vr);
\draw (z6) [fill=white] circle (\vr);
\draw (z7) [fill=white] circle (\vr);
\draw (z8) [fill=white] circle (\vr);

\draw (w1) [fill=white] circle (\vr);
\draw (w2) [fill=white] circle (\vr);
\draw (w3) [fill=white] circle (\vr);
\draw (w4) [fill=white] circle (\vr);
\draw (w5) [fill=white] circle (\vr);
\draw (w6) [fill=white] circle (\vr);
\draw (w7) [fill=white] circle (\vr);
\draw (w8) [fill=white] circle (\vr);

\draw (u1) [fill=white] circle (\vr);
\draw (u2) [fill=white] circle (\vr);
\draw (u3) [fill=white] circle (\vr);
\draw (u4) [fill=white] circle (\vr);
\draw (u5) [fill=white] circle (\vr);
\draw (u6) [fill=white] circle (\vr);
\draw (u7) [fill=white] circle (\vr);
\draw (u8) [fill=white] circle (\vr);

\draw (v1) [fill=white] circle (\vr);
\draw (v2) [fill=white] circle (\vr);
\draw (v3) [fill=white] circle (\vr);
\draw (v4) [fill=white] circle (\vr);
\draw (v5) [fill=white] circle (\vr);
\draw (v6) [fill=white] circle (\vr);
\draw (v7) [fill=white] circle (\vr);
\draw (v8) [fill=white] circle (\vr);

\draw[anchor = south west] (x1) node {$1$};
\draw[anchor = south west] (x3) node {$1$};
\draw[anchor = south west] (x5) node {$1$};
\draw[anchor = south west] (x7) node {$1$};

\draw[anchor = south west] (z1) node {$1$};
\draw[anchor = south west] (z3) node {$1$};
\draw[anchor = south west] (z5) node {$1$};
\draw[anchor = south west] (z7) node {$1$};

\draw[anchor = south west] (u1) node {$1$};
\draw[anchor = south west] (u3) node {$1$};
\draw[anchor = south west] (u5) node {$1$};
\draw[anchor = south west] (u7) node {$1$};

\draw[anchor = south west] (y1) node {$2$};
\draw[anchor = south west] (y4) node {$2$};
\draw[anchor = south west] (y7) node {$2$};

\draw[anchor = south west] (w1) node {$3$};
\draw[anchor = south west] (w5) node {$3$};

\draw[anchor = south west] (v1) node {$4$};
\draw[anchor = south west] (v6) node {$4$};

\draw[anchor = south west] (x2) node {$5$};
\draw[anchor = south west] (x8) node {$5$};

\draw[anchor = south west] (z2) node {$6$};

\draw[anchor = south west] (u2) node {$7$};

\draw[anchor = south west] (x4) node {$8$};
\draw[anchor = south west] (x6) node {$9$};

\draw[anchor = south west] (y2) node {$10$};
\draw[anchor = south west] (y3) node {$11$};
\draw[anchor = south west] (y5) node {$12$};
\draw[anchor = south west] (y6) node {$13$};
\draw[anchor = south west] (y8) node {$14$};

\draw[anchor = south west] (z4) node {$15$};
\draw[anchor = south west] (z6) node {$16$};
\draw[anchor = south west] (z8) node {$17$};

\draw[anchor = south west] (w2) node {$18$};
\draw[anchor = south west] (w3) node {$19$};
\draw[anchor = south west] (w4) node {$20$};
\draw[anchor = south west] (w6) node {$21$};
\draw[anchor = south west] (w7) node {$22$};
\draw[anchor = south west] (w8) node {$23$};

\draw[anchor = south west] (u4) node {$24$};
\draw[anchor = south west] (u6) node {$25$};
\draw[anchor = south west] (u8) node {$26$};

\draw[anchor = south west] (v2) node {$27$};
\draw[anchor = south west] (v3) node {$28$};
\draw[anchor = south west] (v4) node {$29$};
\draw[anchor = south west] (v5) node {$30$};
\draw[anchor = south west] (v7) node {$31$};
\draw[anchor = south west] (v8) node {$32$};


\path (9,0) coordinate (x1);
\path (10,0) coordinate (x2);
\path (11,0) coordinate (x3);
\path (12,0) coordinate (x4);
\path (13,0) coordinate (x5);
\path (14,0) coordinate (x6);
\path (15,0) coordinate (x7);
\path (16,0) coordinate (x8);

\path (9,1) coordinate (y1);
\path (10,1) coordinate (y2);
\path (11,1) coordinate (y3);
\path (12,1) coordinate (y4);
\path (13,1) coordinate (y5);
\path (14,1) coordinate (y6);
\path (15,1) coordinate (y7);
\path (16,1) coordinate (y8);

\path (9,2) coordinate (z1);
\path (10,2) coordinate (z2);
\path (11,2) coordinate (z3);
\path (12,2) coordinate (z4);
\path (13,2) coordinate (z5);
\path (14,2) coordinate (z6);
\path (15,2) coordinate (z7);
\path (16,2) coordinate (z8);

\path (9,3) coordinate (w1);
\path (10,3) coordinate (w2);
\path (11,3) coordinate (w3);
\path (12,3) coordinate (w4);
\path (13,3) coordinate (w5);
\path (14,3) coordinate (w6);
\path (15,3) coordinate (w7);
\path (16,3) coordinate (w8);

\path (9,4) coordinate (u1);
\path (10,4) coordinate (u2);
\path (11,4) coordinate (u3);
\path (12,4) coordinate (u4);
\path (13,4) coordinate (u5);
\path (14,4) coordinate (u6);
\path (15,4) coordinate (u7);
\path (16,4) coordinate (u8);

\path (9,5) coordinate (v1);
\path (10,5) coordinate (v2);
\path (11,5) coordinate (v3);
\path (12,5) coordinate (v4);
\path (13,5) coordinate (v5);
\path (14,5) coordinate (v6);
\path (15,5) coordinate (v7);
\path (16,5) coordinate (v8);


\draw (x1) -- (x2);
\draw (x2) -- (x3);
\draw (x3) -- (x4);
\draw (x4) -- (x5);
\draw (x5) -- (x6);
\draw (x6) -- (x7);
\draw (x7) -- (x8);

\draw (y1) -- (y2);
\draw (y2) -- (y3);
\draw (y3) -- (y4);
\draw (y4) -- (y5);
\draw (y5) -- (y6);
\draw (y6) -- (y7);
\draw (y7) -- (y8);

\draw (z1) -- (z2);
\draw (z2) -- (z3);
\draw (z3) -- (z4);
\draw (z4) -- (z5);
\draw (z5) -- (z6);
\draw (z6) -- (z7);
\draw (z7) -- (z8);

\draw (w1) -- (w2);
\draw (w2) -- (w3);
\draw (w3) -- (w4);
\draw (w4) -- (w5);
\draw (w5) -- (w6);
\draw (w6) -- (w7);
\draw (w7) -- (w8);

\draw (u1) -- (u2);
\draw (u2) -- (u3);
\draw (u3) -- (u4);
\draw (u4) -- (u5);
\draw (u5) -- (u6);
\draw (u6) -- (u7);
\draw (u7) -- (u8);

\draw (v1) -- (v2);
\draw (v2) -- (v3);
\draw (v3) -- (v4);
\draw (v4) -- (v5);
\draw (v5) -- (v6);
\draw (v6) -- (v7);
\draw (v7) -- (v8);

\draw (x1) -- (y1);
\draw (y1) -- (z1);
\draw (z1) -- (w1);
\draw (w1) -- (u1);
\draw (u1) -- (v1);

\draw (x8) -- (y8);
\draw (y8) -- (z8);
\draw (z8) -- (w8);
\draw (w8) -- (u8);
\draw (u8) -- (v8);

\draw (x2) -- (y2);
\draw (y2) -- (z2);
\draw (z2) -- (w2);
\draw (w2) -- (u2);
\draw (u2) -- (v2);

\draw (x3) -- (y3);
\draw (y3) -- (z3);
\draw (z3) -- (w3);
\draw (w3) -- (u3);
\draw (u3) -- (v3);

\draw (x4) -- (y4);
\draw (y4) -- (z4);
\draw (z4) -- (w4);
\draw (w4) -- (u4);
\draw (u4) -- (v4);

\draw (x5) -- (y5);
\draw (y5) -- (z5);
\draw (z5) -- (w5);
\draw (w5) -- (u5);
\draw (u5) -- (v5);

\draw (x6) -- (y6);
\draw (y6) -- (z6);
\draw (z6) -- (w6);
\draw (w6) -- (u6);
\draw (u6) -- (v6);

\draw (x7) -- (y7);
\draw (y7) -- (z7);
\draw (z7) -- (w7);
\draw (w7) -- (u7);
\draw (u7) -- (v7);

\draw (x1) [fill=white] circle (\vr);
\draw (x2) [fill=white] circle (\vr);
\draw (x3) [fill=white] circle (\vr);
\draw (x4) [fill=white] circle (\vr);
\draw (x5) [fill=white] circle (\vr);
\draw (x6) [fill=white] circle (\vr);
\draw (x7) [fill=white] circle (\vr);
\draw (x8) [fill=white] circle (\vr);

\draw (y1) [fill=white] circle (\vr);
\draw (y2) [fill=white] circle (\vr);
\draw (y3) [fill=white] circle (\vr);
\draw (y4) [fill=white] circle (\vr);
\draw (y5) [fill=white] circle (\vr);
\draw (y6) [fill=white] circle (\vr);
\draw (y7) [fill=white] circle (\vr);
\draw (y8) [fill=white] circle (\vr);

\draw (z1) [fill=white] circle (\vr);
\draw (z2) [fill=white] circle (\vr);
\draw (z3) [fill=white] circle (\vr);
\draw (z4) [fill=white] circle (\vr);
\draw (z5) [fill=white] circle (\vr);
\draw (z6) [fill=white] circle (\vr);
\draw (z7) [fill=white] circle (\vr);
\draw (z8) [fill=white] circle (\vr);

\draw (w1) [fill=white] circle (\vr);
\draw (w2) [fill=white] circle (\vr);
\draw (w3) [fill=white] circle (\vr);
\draw (w4) [fill=white] circle (\vr);
\draw (w5) [fill=white] circle (\vr);
\draw (w6) [fill=white] circle (\vr);
\draw (w7) [fill=white] circle (\vr);
\draw (w8) [fill=white] circle (\vr);

\draw (u1) [fill=white] circle (\vr);
\draw (u2) [fill=white] circle (\vr);
\draw (u3) [fill=white] circle (\vr);
\draw (u4) [fill=white] circle (\vr);
\draw (u5) [fill=white] circle (\vr);
\draw (u6) [fill=white] circle (\vr);
\draw (u7) [fill=white] circle (\vr);
\draw (u8) [fill=white] circle (\vr);

\draw (v1) [fill=white] circle (\vr);
\draw (v2) [fill=white] circle (\vr);
\draw (v3) [fill=white] circle (\vr);
\draw (v4) [fill=white] circle (\vr);
\draw (v5) [fill=white] circle (\vr);
\draw (v6) [fill=white] circle (\vr);
\draw (v7) [fill=white] circle (\vr);
\draw (v8) [fill=white] circle (\vr);

\draw[anchor = south west] (x1) node {$1$};
\draw[anchor = south west] (x3) node {$1$};
\draw[anchor = south west] (x5) node {$1$};
\draw[anchor = south west] (x7) node {$1$};

\draw[anchor = south west] (z1) node {$1$};
\draw[anchor = south west] (z3) node {$1$};
\draw[anchor = south west] (z5) node {$1$};
\draw[anchor = south west] (z7) node {$1$};

\draw[anchor = south west] (u1) node {$1$};
\draw[anchor = south west] (u3) node {$1$};
\draw[anchor = south west] (u5) node {$1$};
\draw[anchor = south west] (u7) node {$1$};

\draw[anchor = south west] (x2) node {$5$};
\draw[anchor = south west] (x8) node {$5$};

\draw[anchor = south west] (y1) node {$6$};
\draw[anchor = south west] (y8) node {$6$};

\draw[anchor = south west] (z2) node {$4$};
\draw[anchor = south west] (z8) node {$4$};

\draw[anchor = south west] (w1) node {$2$};
\draw[anchor = south west] (w4) node {$2$};
\draw[anchor = south west] (w7) node {$2$};

\draw[anchor = south west] (v1) node {$3$};
\draw[anchor = south west] (v5) node {$3$};

\draw[anchor = south west] (u2) node {$7$};

\draw[anchor = south west] (x4) node {$8$};
\draw[anchor = south west] (x6) node {$9$};
\draw[anchor = south west] (y2) node {$10$};
\draw[anchor = south west] (y3) node {$11$};
\draw[anchor = south west] (y4) node {$12$};
\draw[anchor = south west] (y5) node {$13$};
\draw[anchor = south west] (y6) node {$14$};
\draw[anchor = south west] (y7) node {$15$};
\draw[anchor = south west] (z4) node {$16$};
\draw[anchor = south west] (z6) node {$17$};
\draw[anchor = south west] (w2) node {$18$};
\draw[anchor = south west] (w3) node {$19$};
\draw[anchor = south west] (w5) node {$20$};
\draw[anchor = south west] (w6) node {$21$};
\draw[anchor = south west] (w8) node {$22$};
\draw[anchor = south west] (u4) node {$23$};
\draw[anchor = south west] (u6) node {$24$};
\draw[anchor = south west] (u8) node {$25$};
\draw[anchor = south west] (v2) node {$26$};
\draw[anchor = south west] (v3) node {$27$};
\draw[anchor = south west] (v4) node {$28$};
\draw[anchor = south west] (v6) node {$29$};
\draw[anchor = south west] (v7) node {$30$};
\draw[anchor = south west] (v8) node {$31$};

\end{tikzpicture}
\end{center}
\caption{Packing coloring for $P_8 \circ P_6$ using 32 colors according to Theorem \ref{pot-poljuben} (a) and 31 colors (b) (not all edges of a graph are drawn).}
\label{fig:pot8pot6}
\end{figure}

Another example can be constructed as follows. Let $n_t=1+lcm(2,3,\ldots,t+1)$, $H\ncong\overline{K}_m$ a graph and $k=|H|-\alpha(H)$. Notice that $n_t$ is chosen in such a way that every $\rho_i(P_{n_t})$-set, $1\leq i\leq t$, contains the first and the last vertex of $P_{n_t}$. If $t-1>k$, then we cannot obtain $\rho_i(P_{n_t})$ vertices of color $i$ in $P_{n_t}\circ H$ for some $2\leq i\leq t$ and the upper bound of Theorem \ref{pot-poljuben} is not exact.

\end{document}